\documentclass[a4paper]{article}
\usepackage{fullpage}
\usepackage{amsmath}
\usepackage{amssymb}
\usepackage{amsthm}
\usepackage{graphicx}
\usepackage{bbm}
\usepackage{enumerate}
\usepackage{microtype}
\usepackage{tocbibind}
\usepackage{xcolor}
\usepackage{indentfirst}
\usepackage[none]{hyphenat}
\usepackage[colorlinks=true,urlcolor=blue,linkcolor=blue,plainpages=false,pdfpagelabels]{hyperref}
\allowdisplaybreaks

\newtheorem{theorem}{Theorem}[section]
\newtheorem{proposition}[theorem]{Proposition}
\newtheorem{lemma}[theorem]{Lemma}

\newtheorem{definition}[theorem]{Definition}

\newcommand{\R}{\mathbb{R}}

\newcommand{\eps}{\varepsilon}

\newcommand{\se}{\subseteq}

\DeclareMathOperator{\argmin}{argmin}
\DeclareMathOperator{\Prox}{Prox}
\title{Further quantitative moduli around uniform convexity}
\author{Andrei Sipo\c s\\[2mm]
\footnotesize Research Center for Logic, Optimization and Security (LOS), Department of Computer Science,\\
\footnotesize Faculty of Mathematics and Computer Science, University of Bucharest,\\
\footnotesize Academiei 14, 010014 Bucharest, Romania\\[1mm]
\footnotesize Simion Stoilow Institute of Mathematics of the Romanian Academy,\\
\footnotesize Calea Grivi\c tei 21, 010702 Bucharest, Romania\\[2mm]
\footnotesize Email: andrei.sipos@fmi.unibuc.ro\\
}
\date{}

\begin{document}

\maketitle

\begin{abstract}
We derive new formulas for quantities involved in the study of uniformly convex spaces, linear and nonlinear.

\noindent {\em Mathematics Subject Classification 2020}: 51F99, 52A55, 53C23.

\noindent {\em Keywords:} Uniform convexity, hyperbolic spaces, projection, proximal mapping.
\end{abstract}

\section{Introduction}

Proof mining is a research paradigm (first suggested by Georg Kreisel in the 1950s under the name of `unwinding of proofs’ and then highly developed starting in the 1990s by Ulrich Kohlenbach and his students and collaborators), which aims to apply tools from proof theory (a branch of mathematical logic) to proofs in mainstream mathematics in order to uncover additional -- typically quantitative -- information which may not be readily apparent. In the past 25 years, proof mining took off by obtaining quantitative results in diverse areas like nonlinear analysis, ergodic theory, approximation theory, commutative algebra, probability theory, for more information see Kohlenbach's 2008 monograph \cite{Koh08} or his more recent ICM survey \cite{Koh19}.

This paper focuses on two illustrative case studies from nonlinear analysis where this methodology may be applied in order to obtain new quantitative moduli.

In 1981, J. Pr\"u{\ss} \cite{Pru81} proved a characterization of uniform convexity in Banach spaces, which seems to be an analogue of the uniform continuity on bounded subsets of the duality mapping in uniformly smooth spaces, for whose quantitative treatment see \cite{KohLeu12}, and for the exact connection between the two properties, see \cite[Chapters 4 and 5]{Chi09}. The advantage of Pr\"u{\ss}'s proof is that it is quite short and highly direct (e.g. the definition of uniform convexity is applied as it is), yet slightly logically non-trivial, and thus constitutes an appropriate test case for proof mining. More concretely, the proof uses tools which are second nature to analysts, but involving foundationally strong logical principles, like forming sequences, passing to limits superior and inferior and using various inequalities regarding such limits. A naive analysis of the proof would, of course, involve, alternating quantifiers for these limits and corresponding comprehension principles. What we show is that the proof may be massively logically simplified via replacing sequences by fixed points, making intermediate statements more `approximate', at the expense of the argument becoming more messy, finally obtaining a proof from which the corresponding modulus may be easily read. This is what we do in Section~\ref{sec:prus}.

In the second case study, described in Section~\ref{sec:prox}, we move on to a class of `nonlinear' spaces -- metric spaces which are endowed with a generalization of the usual linear convexity structure -- where we treat a property of the proximal mapping, a tool recently extended to this setting in \cite{SipXX}. Here, the main difficulty involves the elimination of tools used in uniformly Banach spaces which are specific to that setting: the dual and the subdifferential. We show how one may replace those my simple metric arguments and thus derive the sought modulus.

\section{Preliminaries}\label{sec:prelim}

\subsection{Linear spaces}

All Banach spaces in this paper will contain a non-zero element.

Uniform convexity in Banach spaces was first introduced by Clarkson \cite{Cla36}. We use the following formulation: a Banach space $X$ is {\it uniformly convex} if there is an $\eta : (0,\infty) \to (0,1]$, called a {\it modulus of uniform convexity}, such that for all $\eps>0$ and all $x,y \in X$ with $\|x\|\leq 1$, $\|y\| \leq 1$ and $\|x-y\| \geq \eps$ one has that
$$\left\|\frac{x+y}2\right\| \leq 1-\eta(\eps).$$
One typically defines the fixed modulus of uniform convexity $\delta_X$ by putting, for all $\eps \in (0,2]$,
$$\delta_X(\eps):=  \inf \left\{ 1- \left\| \frac{x+y}2 \right\| \bigm| \|x\|\leq 1, \|y\|\leq 1, \|x-y\| \geq \eps \right\}$$
and says that $X$ is uniformly convex if and only if, for all $\eps \in (0,2]$, $\delta_X(\eps)>0$. It is immediate that the two definitions coincide: in this case, $\delta_X$ may serve as a modulus $\eta$ in our sense (it is, in fact, the largest such modulus) by putting, for all $\eps>0$, $\eta(\eps):=\delta_X(\min(\eps,2))$.

It is immediate that the use of this modulus scales, as follows.

\begin{proposition}\label{mod-r}
Let $X$ be a uniformly convex Banach space with modulus $\eta : (0,\infty) \to (0,1]$. Then, for all $\eps$, $r>0$ and all $x,y \in X$ with $\|x\|\leq r$, $\|y\| \leq r$ and $\|x-y\| \geq \eps r$ one has that
$$\left\|\frac{x+y}2\right\| \leq (1-\eta(\eps))r.$$
\end{proposition}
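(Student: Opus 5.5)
The plan is a direct rescaling argument: divide $x$ and $y$ by $r$ to reduce to the unit-ball case, apply the definition of the modulus $\eta$ there, and multiply back by $r$. Homogeneity of the norm does all the work, so no earlier result beyond the definition of a modulus of uniform convexity should be needed.

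Fix $\eps>0$, $r>0$ and $x,y\in X$ with $\|x\|\leq r$, $\|y\|\leq r$ and $\|x-y\|\geq \eps r$. Set $x':=x/r$ and $y':=y/r$, which is legitimate since $r>0$. By homogeneity of the norm,
$$\|x'\|=\frac{\|x\|}{r}\leq 1,\qquad \|y'\|=\frac{\|y\|}{r}\leq 1,\qquad \|x'-y'\|=\frac{\|x-y\|}{r}\geq \eps.$$
So $x'$ and $y'$ satisfy exactly the hypotheses in the definition of $\eta$ being a modulus of uniform convexity. That definition yields
$$\left\|\frac{x'+y'}{2}\right\|\leq 1-\eta(\eps).$$
Since $\frac{x'+y'}{2}=\frac{1}{r}\cdot\frac{x+y}{2}$, homogeneity once more gives $\left\|\frac{x+y}{2}\right\|=r\left\|\frac{x'+y'}{2}\right\|\leq (1-\eta(\eps))r$, which is the claim.

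There is no real obstacle. The only points to check are that $r>0$, so the division is allowed, and that the scaling preserves the direction of each inequality. Both are immediate. Note also that the argument uses the same $\eps$ throughout, so no adjustment of the modulus (such as truncating $\eps$ at $2$) is required.
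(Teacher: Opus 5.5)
Your proof is correct: dividing by $r$, applying the definition of $\eta$ with the same $\eps$, and multiplying back is exactly the scaling argument the paper has in mind when it calls the statement ``immediate'' (it gives no written proof). Your remark that no truncation of $\eps$ at $2$ is needed is also right, since $\eta$ is defined on all of $(0,\infty)$.
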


\begin{definition}
Let $X$ be a Banach space. We define the {\bf normalized duality mapping of $X$} to be the map $J :X\to 2^{X^*}$, defined, for all $x \in X$, by
$$ J(x) := \{x^*\in X^* \mid  x^*(x)=\|x\|^2,\ \|x^*\|=\|x\| \}.$$
\end{definition}

It is known (see \cite[Lemma 3.4]{Chi09}) that, for any space $X$ and any $x \in X$, we have that $J(x)\neq\emptyset$. We shall, occasionally, but not always, denote, for all spaces $X$, all $x^* \in X^*$ and $y \in X$, $x^*(y)$ by $\langle y,x^* \rangle$ (by analogy to Hilbert spaces). 

\subsection{Nonlinear spaces}

One says that a metric space $(X,d)$ is {\it geodesic} if for any two points $x$, $y \in X$ there is a {\it geodesic} that joins them, i.e.\ a mapping $\gamma : [0,1] \to X$ such that $\gamma(0)=x$, $\gamma(1)=y$ and for any $t$, $t' \in [0,1]$ we have that
$$d(\gamma(t),\gamma(t')) = |t-t'| d(x,y).$$

Generally, geodesics need not be unique, and, thus, one is led to the following definition, which considers geodesics on a metric space as an additional structure\footnote{The convexity function $W$ was first considered by Takahashi in \cite{Tak70} where a triple $(X,d,W)$ satisfying $(W1)$ is called a convex metric space. The notion used here, frequently considered nowadays to be the nonlinear generalization of convexity in normed spaces, was introduced by Kohlenbach in \cite{Koh05}. As said in the Introduction, see \cite[pp. 384--388]{Koh08} for a detailed discussion on the relationship between various definitions of hyperbolicity and on the proof-theoretical considerations that ultimately led to the adoption of this one. We only mention here that this notion is more general than that of hyperbolic spaces in the sense of Reich and Shafrir \cite{ReiSha90}, and slightly more restrictive than the setting due to Goebel and Kirk \cite{GoeKir83} of spaces of hyperbolic type. } (and not as a property), which is required to satisfy additional properties.

\begin{definition}
A {\bf $W$-hyperbolic space} is a triple $(X,d,W)$ where $(X,d)$ is a metric space and $W: X^2 \times [0,1] \to X$ such that, for all $x$, $y$, $z$, $w \in X$ and $\lambda$, $\mu \in [0,1]$, we have that
\begin{enumerate}[(W1)]
\item $d(z,W(x,y,\lambda)) \leq (1-\lambda)d(z,x) + \lambda d(z,y)$;
\item $d(W(x,y,\lambda),W(x,y,\mu)) = |\lambda-\mu|d(x,y)$;
\item $W(x,y,\lambda)=W(y,x,1-\lambda)$;
\item $d(W(x,z,\lambda),W(y,w,\lambda))\leq(1-\lambda) d(x,y) + \lambda d(z,w)$.
\end{enumerate}
\end{definition}

Clearly, any normed space may be made into a $W$-hyperbolic space in a canonical way. 

A subset $C$ of a $W$-hyperbolic space $(X,d,W)$ is called {\it convex} if, for any $x$, $y \in C$ and $\lambda \in [0,1]$, $W(x,y,\lambda) \in C$. If $(X,d,W$) is a $W$-hyperbolic space, $x$, $y \in X$ and $\lambda \in [0,1]$, we denote the point $W(x,y,\lambda)$ by $(1-\lambda)x+\lambda y$. We will, also, mainly write $\frac{x+y}2$ for $\frac12 x + \frac12 y$.

\begin{proposition}\label{quad}
Let $(X,d,W)$ be a $W$-hyperbolic space, $\lambda \in [0,1]$ and $x$, $y$, $a \in X$. Then
$$d^2\left((1-\lambda)x+\lambda y,a\right) \leq (1-\lambda)d^2(x,a)+\lambda d^2(y,a).$$
\end{proposition}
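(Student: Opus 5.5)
The plan is to combine axiom (W1) with the convexity of the squaring function on $[0,\infty)$.

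First, apply (W1) with $z:=a$. This gives
$$d\left((1-\lambda)x+\lambda y,a\right) \leq (1-\lambda)d(x,a)+\lambda d(y,a).$$
Both sides are nonnegative and $t\mapsto t^2$ is increasing on $[0,\infty)$, so we may square the inequality:
$$d^2\left((1-\lambda)x+\lambda y,a\right) \leq \left((1-\lambda)d(x,a)+\lambda d(y,a)\right)^2.$$

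Next, bound the right-hand side using convexity of $t\mapsto t^2$ (Jensen's inequality for two points). Concretely, for real numbers $s,t$ we have
$$(1-\lambda)s^2+\lambda t^2-\left((1-\lambda)s+\lambda t\right)^2=\lambda(1-\lambda)(s-t)^2\geq 0.$$
Taking $s:=d(x,a)$ and $t:=d(y,a)$ and chaining the two inequalities yields the claimed estimate.

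No step is a real obstacle. The only points to check are that squaring preserves the inequality, which holds because both sides are nonnegative, and the elementary identity above, which follows by direct expansion.
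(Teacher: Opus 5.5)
Your proposal is correct and follows the paper's proof: apply (W1) with $z:=a$, square both (nonnegative) sides, and bound the result by convexity of $t\mapsto t^2$. Your explicit identity $(1-\lambda)s^2+\lambda t^2-\left((1-\lambda)s+\lambda t\right)^2=\lambda(1-\lambda)(s-t)^2$ simply spells out the convexity step that the paper cites directly.
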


\begin{proof}
Using $(W1)$, we get that
$$d^2\left((1-\lambda)x+\lambda y,a\right) \leq \left((1-\lambda)d(x,a)+\lambda d(y,a)\right)^2 \leq (1-\lambda)d^2(x,a)+\lambda d^2(y,a),$$
the last inequality following from the convexity of the square function.
\end{proof}

As per \cite{Koh05,Leu07}, a particular nonlinear class of $W$-hyperbolic spaces is the one of CAT(0) spaces, introduced by A. Aleksandrov \cite{Ale51} and named as such by M. Gromov \cite{Gro87}, which may be defined, by the discussion in \cite[pp. 387--388]{Koh08}, as those $W$-hyperbolic spaces $(X,d,W)$ such that, for any $a$, $x$, $y \in X$,
\begin{equation}\label{cat0}
d^2\left(\frac{x+y}2,a\right) \leq \frac12 d^2(x,a) + \frac12 d^2(y,a) - \frac14 d^2(x,y).
\end{equation}

In particular, any inner product space is a CAT(0) space, and, indeed, as experience shows, CAT(0) spaces may be regarded as the rightful nonlinear generalization of inner product spaces.

Uniform convexity was first introduced by Leu\c stean \cite{Leu07,Leu10}  in this hyperbolic setting, as motivated by \cite[p. 105]{GoeRei84}. The monotonicity condition was justified by proof-theoretical considerations.

\begin{definition}
If $(X,d,W)$ is a $W$-hyperbolic space, then a {\bf modulus of uniform convexity} for $(X,d,W)$ is a function $\eta :(0, \infty) \times (0,2] \to (0,1]$ such that, for any $r >0$, any $\eps \in (0,2]$ and any $a$, $x$, $y \in X$ with $d(x,a) \leq r$, $d(y,a) \leq r$, $d(x,y) \geq \eps r$, we have that
$$d\left(\frac{x+y}2,a\right) \leq (1-\eta(r,\eps))r.$$
We call the modulus {\bf monotone} if, for any $r$, $s >0$ with $s \leq r$ and any $\eps \in (0,2]$, we have that $\eta(r,\eps) \leq \eta(s,\eps)$.

A {\bf $UCW$-hyperbolic space} is a $W$-hyperbolic space that admits a monotone modulus of uniform convexity.
\end{definition}

As remarked in \cite[Proposition 2.6]{Leu07}, CAT(0) spaces are $UCW$-hyperbolic spaces having as a modulus of uniform convexity the simple function $(r,\eps) \mapsto \eps^2/8$. Recently, together with Pedro Pinto, we produced, for the first time, a non-normed {\bf and} non-CAT(0) example of a $UCW$-hyperbolic space \cite{PinSip}.

In a metric space $(X,d)$, a nonempty subset $C$ of $X$ is a {\it Chebyshev set} if, for any $x \in X$, there exists a unique $y \in C$ such that, for any $z \in C$, $d(x,y) \leq d(x,z)$ -- this defines the {\it projection mapping} $P_C:X \to C$. By \cite[Proposition 2.4]{Leu10}, any closed, convex, nonempty subset of a complete $UCW$-hyperbolic space is a Chebyshev set.

We will now discuss a generalization of the projection. The following notions are standard in convex optimization.
\begin{definition}
Let $(X,d,W)$ be a complete $UCW$-hyperbolic space and $f : X \to \R \cup \{\infty\}$. We say that $f$ is:
\begin{itemize}
\item {\bf proper} if there is an $x \in X$ with $f(x) \neq \infty$;
\item {\bf convex} if, for all $x$, $y \in X$ and all $\lambda \in [0,1]$, $f((1-\lambda)x+\lambda y) \leq (1-\lambda)f(x) + \lambda f(y)$;
\item {\bf lower semicontinuous (lsc)} if, for all $x \in X$, $r \in \R$ with $r<f(x)$ there is a $\delta >0$ such that, for all $z \in X$ with $d(x,z) \leq \delta$, $r<f(z)$.
\end{itemize}
\end{definition}

In another paper \cite{SipXX}, we showed that one may extend proximal minimization, an established tool of convex optimization, to this uniformly convex hyperbolic setting (the first such nonlinear generalization, namely to complete CAT(0) spaces, was introduced by Jost \cite{Jos95}). Concretely, we may define, for any complete $UCW$-hyperbolic space $(X,d,W)$ and for any proper, convex, lsc function $f : X \to \R \cup \{\infty\}$, the {\it proximal mapping of $f$}, $\Prox_f : X \to X$, by putting, for any $x \in X$,
$$\Prox_fx := \argmin_{y \in X} \left(f(y) + \frac12 d^2(x,y)\right).$$
This operator generalizes the projection onto a closed, convex, nonempty subset $C$, since the latter is just the proximal mapping of the indicator function $\iota_C$ (which is $0$ on $C$ and $\infty$ otherwise).

\section{A property equivalent to uniform convexity}\label{sec:prus}

The following result was proven by Pr\"u{\ss} in \cite{Pru81}.

\begin{theorem}[{\cite[Theorem 1]{Pru81}}]\label{pru} 
Let $X$ be a Banach space. We have that $X$ is uniformly convex iff for any $R >0$ there is a function $\omega_R : [0, \infty) \to [0,\infty)$, which is non-decreasing and $0$ is its only zero, such that, for any $x$, $y \in X$ with $\|x\|$, $\|y\| \leq R$ and for any $x^* \in Jx$ and $y^* \in Jy$, we have that
$$\langle x-y, x^*-y^* \rangle \geq \omega_R(\|x-y\|) \cdot \|x-y\|.$$
\end{theorem}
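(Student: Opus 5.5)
We prove the two implications separately. Throughout we use only the definition of $J$ and the modulus $\eta$ from Proposition~\ref{mod-r}.

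\emph{($\Rightarrow$).} Fix $R>0$, and take $x,y$ with $\|x\|,\|y\|\le R$, $x^*\in Jx$, $y^*\in Jy$.

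\begin{enumerate}[(i)]
\item Expanding with the definition of $J$ gives
$$\langle x-y,x^*-y^*\rangle=\|x\|^2+\|y\|^2-x^*(y)-y^*(x).$$
Since $x^*(y)\le\|x\|\|y\|$ and $y^*(x)\le \|x\|\|y\|$, this quantity is at least $(\|x\|-\|y\|)^2\ge 0$.

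\item Put $\eps:=\|x-y\|>0$ and assume w.l.o.g.\ $\|y\|\le\|x\|=:r$, so $r\ge \eps/2$.

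\item \emph{Case A: $\|x\|-\|y\|\ge \eps/4$.} Then (i) already gives the bound $\eps^2/16$.

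\item \emph{Case B: $\|x\|-\|y\|<\eps/4$.} Both points lie in the ball of radius $r$, and $\|x-y\|=\eps\ge (\eps/(2R))\, r$. Proposition~\ref{mod-r} yields
$$\left\|\tfrac{x+y}2\right\|\le (1-\eta(\eps/(2R)))\,r.$$

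\item Next use the identity
$$\langle x-y,x^*-y^*\rangle=\langle x-y,x^*\rangle+\langle y-x,y^*\rangle.$$
We have $\langle x-y,x^*\rangle=2\langle x-\tfrac{x+y}2,x^*\rangle\ge 2(r^2-r\|\tfrac{x+y}2\|)\ge 2r^2\eta$.

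\item Similarly, $\langle y-x,y^*\rangle=2\langle y-\tfrac{x+y}2,y^*\rangle\ge 2(\|y\|^2-\|y\|\,\|\tfrac{x+y}2\|)$. This can be negative, but by at most $2\|y\|(r-\|y\|)+\ldots$. To control it, I would instead apply the estimate symmetrically, using the convexity bound relative to $\|y\|$ after rescaling $x$ to $\|y\|x/\|x\|$, which moves $x$ by less than $\eps/4$.

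\item The cleaner route: let $u:=\frac{\|y\|}{\|x\|}x$. Then $\|u-y\|\ge 3\eps/4$ and both $u,y$ have norm $\|y\|$. The tangent argument applied to $u,y$ gives a lower bound of order $\|y\|^2\eta$. Pick the cut-off $\eps/4$ smaller if needed (e.g.\ $c\eps\,\eta(\eps/(2R))$) so that the error from replacing $x$ by $u$ is absorbed.

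\item Since $\|y\|\ge r-\eps/4\ge \eps/4$, the resulting lower bound takes the form $\phi_R(\eps)=c\,\eps^2\eta(c'\eps/R)$ for constants $c,c'>0$.

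\item Define
$$\omega_R(t):=\inf_{s\ge t}\phi_R(s)/s,$$
or simply use $\eta$ monotonized. This function is non-decreasing and vanishes only at $0$.
\end{enumerate}

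The main obstacle is this bookkeeping in Case B: choosing the threshold between the cases so that the perturbation error is dominated by the $\eta$ term.

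\emph{($\Leftarrow$).} Suppose uniform convexity fails for some $\eps$. Then there are $x_n,y_n$ in the unit ball with $\|x_n-y_n\|\ge\eps$ and $\|\tfrac{x_n+y_n}2\|\to 1$. Consequently $\|x_n\|,\|y_n\|\to1$, and we may normalize them to unit vectors.

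\begin{enumerate}[(i)]
\item Set $z_n:=\tfrac{x_n+y_n}2$, take $z_n^*\in J(z_n)$, and let $f_n:=z_n^*/\|z_n^*\|$.
\item Then $f_n(x_n)+f_n(y_n)=2\|z_n\|\to 2$, so $f_n(x_n)\to1$ and $f_n(y_n)\to1$.
\item Apply the hypothesis with $R=1$ to the pair $(x_n,z_n)$ and to the pair $(y_n,z_n)$, choosing $z_n^*$ as the functional for $z_n$. Expanding the brackets:
$$\langle x_n-z_n,x_n^*-z_n^*\rangle=1+\|z_n\|^2-x_n^*(z_n)-z_n^*(x_n).$$
Here $z_n^*(x_n)=\|z_n\|f_n(x_n)\to1$.
\item The remaining term $x_n^*(z_n)$ is not controlled on its own. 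Instead, add the two brackets: since $x_n^*(z_n)+y_n^*(z_n)$ is bounded by the norms, the sum is bounded above by a quantity tending to $0$.
\item On the other hand, the sum is bounded below by $2\omega_1(\eps/2)\eps/2>0$, because $\|x_n-z_n\|=\|y_n-z_n\|=\|x_n-y_n\|/2\ge\eps/2$ and $\omega_1$ is non-decreasing. This is a contradiction.
\end{enumerate}

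To make step (iv) precise, I would first try the direct comparison $x_n^*(z_n)\ge$ something; failing that, I would pass to the pair $(x_n,y_n)$ with $x^*=f_n$-type choices. That is the delicate point of this direction.
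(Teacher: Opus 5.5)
Your ($\Rightarrow$) direction works once the bookkeeping is done. It is essentially the paper's quantitative theorem (the one with $\alpha_\eta$), written as a case split rather than a contradiction: the paper pairs the functional of the point of smaller norm with $x+y$, while you use the point of larger norm.

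\begin{itemize}
\item The rescaling to $u$ in (vi)--(vii) is unnecessary. You have $\langle y-x,y^*\rangle=\|y\|^2-y^*(x)\ge-\|y\|(r-\|y\|)$. With $\eta:=\eta(\eps/R)$ and the threshold $r-\|y\|\gtrless r\eta$, you get $r^2\eta^2$ in one case and $2r^2\eta-r^2\eta$ in the other. This gives the bound $\frac{\eps^2}{4}\eta(\eps/R)^2$, which even beats the constant $\frac1{64}$ in $\alpha_\eta$. Note that it is $\eta^2$, not $\eta$ as claimed in (viii).
\item In (ix), the infimum over all $s\ge t$ may vanish, because a modulus is arbitrary beyond $2$ and need not be monotone. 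Restrict to $s\le 2R$ and use a monotone modulus such as $\delta_X$, or define $\omega_R$ as an infimum over actual pairs, as the paper does right after the statement.
\end{itemize}

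The ($\Leftarrow$) direction, which the paper does not prove (it cites Pr\"u{\ss}), has a genuine gap at (iv). It cannot be closed with the pairs $(x_n,z_n)$, $(y_n,z_n)$. For unit $x_n,y_n$, your two brackets sum to exactly $1-\frac12\bigl(x_n^*(y_n)+y_n^*(x_n)\bigr)$. The estimate $x_n^*(z_n)+y_n^*(z_n)\le 2\|z_n\|$ bounds the subtracted term from above, so it bounds the sum from \emph{below}, not from above.

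Nothing forces $x_n^*(y_n)\to1$. In the plane with the $\ell^p$-norm, take $x=2^{-1/p}(1,1)$ and $y=2^{-1/p}(1,-1)$. Then $Jx$ is a single multiple of $(1,1)$ and $Jy$ a single multiple of $(1,-1)$, so $x^*(y)=y^*(x)=0$ and your brackets sum to $1$. Yet $\|x-y\|_p\to 2$ and $\|\frac{x+y}2\|_p=2^{-1/p}\to1$ as $p\to\infty$. An $\ell^2$-sum of such planes gives a single non-uniformly convex space exhibiting this. Since $J$ is single-valued at these points, there is nothing to choose. The fallback to $f_n$ is also unavailable: $f_n\notin J(x_n)$ in general, and the hypothesis says nothing about other functionals.

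The missing idea is to apply the hypothesis at interior points of the almost flat segment, where the supporting functionals are forced to nearly agree. You then transfer the information with the subgradient inequality $\langle v-u,u^*\rangle\le\frac12\|v\|^2-\frac12\|u\|^2$ for $u^*\in Ju$. Set $z=\frac{x+y}2$, $w_1=\frac{3x+y}4$, $w_2=\frac{x+3y}4$ and $d=\frac{x-y}4$, so that $x-w_1=w_1-z=d$, $y-w_2=w_2-z=-d$ and $w_1-w_2=2d$. Apply the inequality four times: at $w_1$ towards $x$, at $w_2$ towards $y$, and at $z$ towards $w_1$ and towards $w_2$. Summing, the $z^*$-terms cancel and
\[
\|z\|^2\le\tfrac12\|x\|^2+\tfrac12\|y\|^2-\tfrac12\langle w_1-w_2,w_1^*-w_2^*\rangle\le\tfrac12\|x\|^2+\tfrac12\|y\|^2-\tfrac14\,\omega_1\bigl(\tfrac12\|x-y\|\bigr)\|x-y\|.
\]
For $\|x\|,\|y\|\le1$ and $\|x-y\|\ge\eps$, this gives $\|z\|^2\le 1-\frac{\eps}{4}\,\omega_1(\frac{\eps}{2})$. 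That is uniform convexity directly, with no sequences needed.
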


Our goal is to obtain a quantitative version of this theorem, i.e. a computable modulus. In order to cut down on the complexity of the result, we apply an oft-repeated piece of proof mining wisdom (due to U. Kohlenbach) that `one should replace the monotonicity of the function by the monotonicity of the statement'. This is encapsulated in the following.

\begin{proposition}
Let $X$ be a Banach space. The condition in Theorem~\ref{pru} is equivalent to the following: there is an $\omega : (0,\infty) \times (0, \infty) \to (0, \infty)$ such that, for any $R$, $\rho >0$, for any $x$, $y \in X$ with $\|x-y\| \geq \rho$ and $\|x\|$, $\|y\| \leq R$ and for any $x^* \in Jx$ and $y^* \in Jy$, we have that
$$\langle x-y, x^*-y^* \rangle \geq \omega(R,\rho) \cdot \|x-y\|.$$
\end{proposition}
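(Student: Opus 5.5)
We prove the two directions separately. The idea in both is to trade the monotonicity of $\omega_R$ for the hypothesis $\|x-y\| \geq \rho$, which is the monotonicity of the statement itself.

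\emph{From Theorem~\ref{pru}'s condition to the new one.} Suppose that for every $R>0$ we have a non-decreasing $\omega_R : [0,\infty)\to[0,\infty)$ whose only zero is $0$. We put $\omega(R,\rho) := \omega_R(\rho)$. Since $\rho>0$ and $0$ is the only zero of $\omega_R$, we get $\omega(R,\rho)>0$. Now take $x,y$ with $\|x\|,\|y\|\leq R$ and $\|x-y\|\geq \rho$, and take $x^*\in Jx$, $y^*\in Jy$. Monotonicity gives $\omega_R(\|x-y\|)\geq \omega_R(\rho)$. Because $\|x-y\|\geq 0$, we obtain
$$\langle x-y,x^*-y^*\rangle \geq \omega_R(\|x-y\|)\cdot\|x-y\| \geq \omega(R,\rho)\cdot\|x-y\|,$$
which is the new condition.

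\emph{From the new condition to Theorem~\ref{pru}'s condition.} Fix $R>0$. The natural choice would be $\omega_R(t):=\omega(R,t)$, but $\omega(R,\cdot)$ need not be monotone. We therefore replace it by an envelope. Define $\omega_R(0):=0$ and, for $t>0$,
$$\omega_R(t) := \min\left(\sup_{0<s\leq t}\omega(R,s),\ 1\right).$$

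\begin{itemize}
\item This is finite, because it is capped at $1$.
\item It is non-decreasing, because the supremum is taken over a growing set.
\item It is strictly positive for $t>0$, because it is at least $\min(\omega(R,t),1)>0$.
\end{itemize}

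Now we check the inequality. If $x=y$, both sides vanish. Otherwise set $t:=\|x-y\|>0$, and take any $s\in(0,t]$. Then $\|x-y\|\geq s$, so the new condition gives
$$\langle x-y,x^*-y^*\rangle\geq \omega(R,s)\cdot t.$$
Taking the supremum over $s\in(0,t]$ gives $\langle x-y,x^*-y^*\rangle\geq \sup_{0<s\leq t}\omega(R,s)\cdot t$. Since $\omega_R(t)$ is at most this supremum, we conclude $\langle x-y,x^*-y^*\rangle\geq \omega_R(t)\cdot t$.

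The cap at $1$ is needed only because the supremum could be infinite. It does not spoil monotonicity or positivity. Alternatively, one could use the bound $\langle x-y,x^*-y^*\rangle\leq 4R^2$ to see that the supremum is in fact at most $4R^2/t$. The only step requiring care is this construction of a monotone function from a possibly non-monotone $\omega$; everything else is immediate.
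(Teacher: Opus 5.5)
Your proof is correct. The forward direction is the paper's: $\omega(R,\rho):=\omega_R(\rho)$ plus monotonicity.

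For the converse you take a different route. The paper does not build $\omega_R$ from the given $\omega$ at all. It uses Pr\"u{\ss}'s intrinsic modulus: the infimum of $\langle x-y,x^*-y^*\rangle/\|x-y\|$ over all admissible $x,y,x^*,y^*$ with $\|x-y\|\geq\rho$, for $\rho\in(0,2R]$, frozen at $\omega_R(2R)$ beyond $2R$. It then has to check two things:
\begin{itemize}
\item the index set is nonempty, via $x$ and $-x$ with $\|x\|=R$, which uses $X\neq\{0\}$;
\item the infimum is nonnegative, via $\langle x-y,x^*-y^*\rangle\geq(\|x\|-\|y\|)^2$.
\end{itemize}
Monotonicity holds because the index set shrinks, positivity because $\omega_R(\rho)\geq\omega(R,\rho)$, and the required inequality is just the definition of the infimum.

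Your capped running supremum is purely order-theoretic. It uses nothing about $J$, needs no nonemptiness or nonnegativity check, and needs no separate treatment of large distances. It would therefore work verbatim for any inequality of the form ``$\Phi(x,y)\geq\omega(\rho)\|x-y\|$ whenever $\|x-y\|\geq\rho$''. What the paper's choice buys in return is canonicity. Its $\omega_R$ is the largest non-decreasing function satisfying Pr\"u{\ss}'s inequality on $(0,2R]$, and it coincides with the modulus in Pr\"u{\ss}'s own proof. Yours depends on the particular $\omega$ supplied and discards all values above $1$.

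One caveat on your closing aside. The bound $\sup_{0<s\leq t}\omega(R,s)\leq 4R^2/t$ is valid only for $t\leq 2R$, where a pair with $\|x\|,\|y\|\leq R$ and $\|x-y\|\geq t$ exists. For $s>2R$ the hypothesis is vacuous, so $\omega(R,s)$ is unconstrained, and the supremum over $(0,t]$ can be infinite once $t>2R$. This is exactly why the paper freezes $\omega_R$ beyond $2R$. So the cap, or such a freezing, is genuinely needed, and the alternative as you state it does not replace it.
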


\begin{proof}
From left to right, we just take, for any $R$, $\rho >0$, $\omega(R,\rho):=\omega_R(\rho)$, and the result follows from the monotonicity of each $\omega_R$.

From right to left, fix $R>0$. We define (as in Pr\"u{\ss}'s proof): $\omega_R(0):=0$; for any $\rho \in (0,2R]$,
$$\omega_R(\rho) := \inf \left\{ \frac{\langle x-y, x^*-y^* \rangle}{\|x-y\|} \mid \text{$x$, $y \in X$, $\|x\|$, $\|y\| \leq R$, $\|x-y\| \geq \rho$, $x^* \in Jx$, $y^* \in Jy$}\right\};$$
and, for any $\rho > 2R$, $\omega_R(\rho):=\omega_R(2R)$.

For a given $\rho \in (0,2R]$, the set of which the infimum is taken is non-empty because one may take an $x \in X$ with $\|x\|=R$ and $y:=-x$, so $\|x-y\|=2R \geq \rho$, and the infimum is non-negative because, for any $x$, $y$ as there,
$$\langle x-y, x^*-y^* \rangle = \|x\|^2 + \|y\|^2 - x^*(y) - y^*(x) \geq \|x\|^2 + \|y\|^2 - 2\|x\|\|y\| = (\|x\|-\|y\|)^2\geq 0.$$

The fact that $\omega_R$ is non-decreasing is immediate.

It remains to be shown that, for any $\rho \in (0,2R]$, $\omega_R(\rho)>0$. Take such a $\rho$ and take $x$, $y \in X$ with $\|x\|$, $\|y\| \leq R$ and $\|x-y\| \geq \rho$, and take $x^* \in Jx$, $y^* \in Jy$, so, by our assumption,
$$\frac{\langle x-y, x^*-y^* \rangle}{\|x-y\|} \geq \omega(R,\rho).$$
Thus, we have that $\omega_R(\rho)\geq \omega(R,\rho)>0$.
\end{proof}

We may simplify the statement even more.

\begin{proposition}
Let $X$ be a Banach space. The condition in Theorem~\ref{pru} is further equivalent to the following: there is an $\alpha : (0,\infty) \times (0, \infty) \to (0, \infty)$ such that, for any $R$, $\rho >0$, for any $x$, $y \in X$ with $\|x-y\| \geq \rho$ and $\|x\|$, $\|y\| \leq R$ and for any $x^* \in Jx$ and $y^* \in Jy$, we have that
$$\langle x-y, x^*-y^* \rangle \geq \alpha(R,\rho).$$
\end{proposition}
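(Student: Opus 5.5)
We prove the equivalence with the $\omega$-formulation of the previous proposition, which is already equivalent to the condition in Theorem~\ref{pru}. Both directions come from rescaling by $\|x-y\|$, using the a priori bounds $\rho \leq \|x-y\| \leq 2R$ that hold under the hypotheses. The one degenerate case is $\rho > 2R$. There no pair $x$, $y$ satisfies the hypotheses, so the statement holds vacuously for any positive value of the function.

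From the $\omega$-version to the $\alpha$-version, set $\alpha(R,\rho) := \omega(R,\rho)\cdot\rho$. Take $x$, $y$ with $\|x\|$, $\|y\| \leq R$ and $\|x-y\| \geq \rho$, and take $x^* \in Jx$, $y^* \in Jy$. Then
$$\langle x-y, x^*-y^* \rangle \geq \omega(R,\rho)\,\|x-y\| \geq \omega(R,\rho)\,\rho = \alpha(R,\rho),$$
and $\alpha(R,\rho) > 0$.

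From the $\alpha$-version to the $\omega$-version, set $\omega(R,\rho) := \alpha(R,\rho)/(2R)$. Under the same hypotheses we have $\|x-y\| \leq \|x\| + \|y\| \leq 2R$, so
$$\langle x-y, x^*-y^* \rangle \geq \alpha(R,\rho) = \omega(R,\rho)\cdot 2R \geq \omega(R,\rho)\,\|x-y\|.$$
Again $\omega$ takes values in $(0,\infty)$, as required.

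No step is a genuine obstacle. The only point needing care is the direction from $\alpha$ to $\omega$: one must use the upper bound $\|x-y\| \leq 2R$ so that a constant lower bound turns into a lower bound proportional to $\|x-y\|$. Combining both directions with the previous proposition yields the claimed equivalence with the condition in Theorem~\ref{pru}.
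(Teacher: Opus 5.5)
Your proof is correct and matches the paper's argument: you take $\alpha(R,\rho):=\omega(R,\rho)\cdot\rho$ in one direction and $\omega(R,\rho):=\alpha(R,\rho)/(2R)$ in the other, using $\rho \leq \|x-y\| \leq 2R$. Your separate remark on the vacuous case $\rho > 2R$ is harmless but unnecessary, since the same formulas already work there.
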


\begin{proof}
It is immediate that one may take, in one direction, $\alpha(R,\rho):=\omega(R,\rho) \cdot \rho$, and, in the other, $\omega(R,\rho):=\alpha(R,\rho)/(2R)$.
\end{proof}

It is for this modulus above that we shall find a quantitative expression.

\begin{theorem}
Define, for any $\eta : (0,\infty) \to (0,1]$ and any $R$, $\rho > 0$,
$$\alpha_\eta(R, \rho) := \min\left( \frac{\rho^2}{64}\eta^2\left(\frac\rho R\right),\ \frac{\rho^2}8\eta\left(\frac\rho R\right) \right).$$
Let $X$ be a uniformly convex Banach space with modulus $\eta : (0,\infty) \to (0,1]$. Let $R$, $\rho >0$, let $x$, $y \in X$ be with $\|x-y\| \geq \rho$ and $\|x\|$, $\|y\| \leq R$ and let $x^* \in Jx$ and $y^* \in Jy$. Then we have that
$$\langle x-y, x^*-y^* \rangle \geq \alpha_\eta(R,\rho).$$
\end{theorem}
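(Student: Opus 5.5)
The plan is to work directly from the definition of $J$ and reduce everything to norms, so that Proposition~\ref{mod-r} can be applied once. Write $r:=\|x\|$ and $s:=\|y\|$ and, by symmetry of the expression $\langle x-y, x^*-y^*\rangle$ under swapping $(x,x^*)$ and $(y,y^*)$, assume without loss of generality that $s\le r$. Since $r+s\geq\|x-y\|\geq\rho$, we get $r\geq\rho/2>0$. We will use two lower bounds for the pairing. The first is the one already obtained in the proof of the first equivalence proposition above:
$$\langle x-y, x^*-y^*\rangle\geq (r-s)^2 .$$
For the second we rewrite $x^*(y)=x^*(x+y)-\|x\|^2\le r\|x+y\|-r^2$, and similarly $y^*(x)\le s\|x+y\|-s^2$. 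Plugging these into $\langle x-y,x^*-y^*\rangle=\|x\|^2+\|y\|^2-x^*(y)-y^*(x)$ gives
$$\langle x-y, x^*-y^*\rangle\geq 2r^2+2s^2-(r+s)\|x+y\|.$$

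Next we bound $\|x+y\|$ using uniform convexity at scale $r$. Both $\|x\|$ and $\|y\|$ are at most $r$. Since $r\le R$, we have $\|x-y\|\geq\rho\geq (\rho/R)\,r$. Proposition~\ref{mod-r} with $\eps:=\rho/R$ therefore yields $\|x+y\|\le 2(1-\eta)r$, where we abbreviate $\eta:=\eta(\rho/R)$. No monotonicity of $\eta$ is needed, because it is evaluated exactly at $\rho/R$. Substituting and simplifying, with $d:=r-s\in[0,r]$, gives
$$\langle x-y, x^*-y^*\rangle\geq 2s^2-2rs+2\eta r(r+s)\geq 2\eta r^2-2sd\geq 2\eta r^2-2rd .$$

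We finish with a case split on $d$. If $d\geq \eta r/2$, the first bound gives at least $\eta^2r^2/4\geq \eta^2\rho^2/16$. If $d<\eta r/2$, the second bound gives at least $2\eta r^2-\eta r^2=\eta r^2\geq \eta\rho^2/4$. In both cases the pairing is at least $\min(\rho^2\eta^2/16,\rho^2\eta/4)$, which dominates $\alpha_\eta(R,\rho)$. The only delicate point is the bookkeeping in the second lower bound: choosing to normalize at the larger norm $r$ (so that Proposition~\ref{mod-r} applies with $\eps=\rho/R$), and handling the negative term $2s(s-r)$ by trading it against the $(r-s)^2$ bound. I expect the rest to be routine algebra, with constants to spare relative to those stated.
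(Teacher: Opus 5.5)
Your proof is correct, and it takes a somewhat different route from the paper's. Both arguments rest on the same three ingredients: the estimate $\langle x-y,x^*-y^*\rangle\geq(\|x\|-\|y\|)^2$, Proposition~\ref{mod-r} applied at the scale $a$ of the larger norm with $\eps=\rho/R$, and testing a duality functional against $x+y$.

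The paper argues by contradiction. It normalizes $\|x\|\leq\|y\|=a$ and assumes the pairing is $<\alpha$. From this it first extracts $\|x\|^2>a^2(1-c)$, via $1-c/2\geq\sqrt{1-c}$. It then feeds the same assumption into the identity $\langle x^*,x+y\rangle=\|x\|^2+\|y\|^2-\langle x-y,x^*-y^*\rangle-(y^*(x)-\|x\|^2)$, testing only the functional of the smaller vector and using $y^*(x)\leq\|x\|\,\|y\|$. A single threshold $\alpha$ therefore has to satisfy two constraints at once.

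You instead test both $x^*$ and $y^*$ against $x+y$. This gives the symmetric, and slightly sharper, bound $2r^2+2s^2-(r+s)\|x+y\|$. After uniform convexity it is linear in the gap $d=r-s$. A direct case split on $d$ versus $\eta r/2$ then lets each lower bound act only where it is strong, with no square roots and no contradiction.

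The payoff is a better constant. Since $\eta\leq1$, your minimum equals $\rho^2\eta^2/16$, four times the stated $\alpha_\eta(R,\rho)=\rho^2\eta^2/64$. That margin matters, because the paper's bookkeeping slips at exactly this point. With $c=\eta/4$ one has $\rho^2\eta^2/64=\rho^2c^2/4$, not $\rho^2c^2/16$. So the paper's step $\alpha\leq a^2c^2/4$ is only justified when $a\geq\rho$; the argument is easily repaired, e.g.\ by taking $c=\eta/2$. Your proof, by contrast, gives the stated bound with room to spare.
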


\begin{proof}
We shall write $\alpha$ for $\alpha_\eta(R,\rho)$. Put also $c:=\frac14\eta\left(\frac\rho R\right)$, so
$$\alpha = \min\left(\frac{\rho^2}{16} \cdot c^2,\ \frac{\rho^2}2 \cdot c\right).$$ 

Assume towards a contradiction that $\langle x-y, x^*-y^* \rangle < \alpha$. Note that we also have that
$$(\|x\|-\|y\|)^2 = \|x\|^2 + \|y\|^2 - 2\|x\|\|y\| \leq   \|x\|^2 + \|y\|^2 - x^*(y) - y^*(x) = \langle x-y, x^*-y^* \rangle < \alpha.$$

Assume w.l.o.g. that $\|x\|\leq \|y\|$, and, since $\|x-y\| \geq \rho > 0$, we have that $\|y\|>0$. Put $a:=\|y\| \leq R$. Note that we have that $a \geq \rho/2$, since, otherwise, we would have that
$$\|x-y\| \leq \|x\| + \|y\| \leq 2a < \rho,$$
a contradiction.

We have that
$$\alpha \leq \frac{\rho^2}{16} \cdot c^2 \leq \frac{a^2c^2}4,$$
so
$$1-\frac{\sqrt{\alpha}}a \geq 1-\frac c 2 \geq \sqrt{1-c},$$
i.e. that $a- \sqrt{\alpha} \geq a\sqrt{1-c}$. On the other hand, since $(\|x\|-\|y\|)^2 < \alpha$, and $\|x\|\leq \|y\|=a$, we have that $a-\|x\| < \sqrt{\alpha}$, from which we get that $\|x\|^2 > (a-\sqrt{\alpha})^2 \geq a^2(1-c)$.

We also have that
$$\alpha \leq \frac{\rho^2}2 \cdot c \leq 2a^2c.$$

Since $\|x-y\| \geq \rho \geq (\rho/R) \cdot a$, we have, by Proposition~\ref{mod-r}, that
$$\left\|\frac{x+y}2\right\| \leq \left(1-\eta\left(\frac\rho R\right)\right) \cdot a,$$
so
$$\|x+y\| \leq 2a\left(1-\eta\left(\frac\rho R\right)\right),$$
and thus that
$$\langle x^*, x+y \rangle \leq \|x\| \cdot \|x+y\| \leq 2a^2\left(1-\eta\left(\frac\rho R\right)\right).$$

On the other hand, using the reasoning from \cite[Theorem 3.2]{Kie02}, we have that
\begin{align*}
\langle x^*, x+y \rangle &= y^*(x) + x^*(y) - (y^*(x) - x^*(x)) \\
&= \|x\|^2 + \|y\|^2 - \langle x-y, x^*-y^* \rangle - (y^*(x) - \|x\|^2) \\
&> 2\|x\|^2 + \|y\|^2 - \|x\|\|y\| - \alpha \\
&= 2\|x\|^2 + \|y\| (\|y\| - \|x\|)  - \alpha \\
&\geq 2\|x\|^2 - \alpha\\
&\geq 2a^2(1-c) - 2a^2c\\
&= 2a^2\left(1-\frac12\eta\left(\frac\rho R\right)\right),
\end{align*}
a contradiction.
\end{proof}

Note that the above result mentions the dual in its statement, so we cannot for now attempt to generalize it to $UCW$-hyperbolic spaces, where we do not have a workable notion of a dual. Our hope comes from finding theorems which only make use of the dual in their {\it proof}, so that use could be removed and the theorem generalized to the nonlinear setting. This is what we do in the next section.

\section{A property regarding proximal mappings in nonlinear spaces}\label{sec:prox}

We have the following (almost trivial) property of the projection.

\begin{lemma}
Let $B$, $r >0$. Let $(X,d,W)$ be a complete $UCW$-hyperbolic space and $C \se X$ be closed, convex, nonempty. Let $x$, $y \in X$ be such that $d(x,P_Cx) \leq B$ and $d(x,y) \leq r$.

Then $d(y,P_Cy) \leq r+B$.
\end{lemma}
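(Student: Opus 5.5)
The argument uses only the minimizing property of the projection and the triangle inequality; uniform convexity enters solely through the well-definedness of $P_C$. By \cite[Proposition 2.4]{Leu10}, $C$ is a Chebyshev set, since it is closed, convex and nonempty in the complete $UCW$-hyperbolic space $(X,d,W)$. Hence $P_Cy$ exists and satisfies $d(y,P_Cy) \leq d(y,z)$ for every $z \in C$.

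The plan is to test this minimality against the particular point $z := P_Cx$, which lies in $C$ by definition of the projection. This gives $d(y,P_Cy) \leq d(y,P_Cx)$. We then bound the right-hand side by the triangle inequality through $x$:
$$d(y,P_Cx) \leq d(y,x) + d(x,P_Cx) \leq r + B,$$
using the hypotheses $d(x,y)\leq r$ and $d(x,P_Cx)\leq B$. Chaining the two inequalities yields $d(y,P_Cy)\leq r+B$.

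There is no real obstacle here. The only point needing care is to justify that $P_C$ is defined at both $x$ and $y$, and the cited Chebyshev property handles this.
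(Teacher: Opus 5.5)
Your proof is correct and is essentially the paper's argument: test the minimality of $P_Cy$ against the point $P_Cx \in C$, then apply the triangle inequality through $x$ to get $d(y,P_Cy) \leq d(y,P_Cx) \leq d(y,x)+d(x,P_Cx) \leq r+B$. Your explicit justification that $P_C$ is well-defined via the Chebyshev property is a harmless addition that the paper leaves implicit.
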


\begin{proof}
We have that $d(y,P_Cy) \leq d(y,P_Cx) \leq d(y,x) + d(x,P_Cx) \leq r+B$.
\end{proof}

We would like to generalize the above to proximal mappings. In uniformly convex Banach spaces, this was shown by Ba\v{c}\'ak and Kohlenbach \cite[Lemma 3.9]{BacKoh18}, using the following result.

\begin{lemma}[{\cite[Lemma 3.8]{BacKoh18}}]
Let $(X,\|\cdot\|)$ be a uniformly convex Banach space and $f$ be a proper, convex, lsc function on $X$. Let $x \in X$.

Then there is a $z^* \in J(x-\Prox_f x)$ such that, for all $y \in X$,
$$f(\Prox_fx) - f(y) \leq \langle z^*, \Prox_fx -y\rangle.$$
\end{lemma}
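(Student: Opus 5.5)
Write $p:=\Prox_f x$. The plan is to combine the first-order optimality of $p$ with the subdifferential of the convex function $\varphi:=\frac12\|x-\cdot\|^2$, whose subdifferential is given by the duality mapping. Since $p$ minimizes $f+\varphi$, we have $0\in\partial(f+\varphi)(p)$. Now $\varphi$ is convex and continuous on all of $X$, while $f$ is proper, convex and lsc, so the Moreau--Rockafellar sum rule applies and gives
$$\partial(f+\varphi)(p)=\partial f(p)+\partial\varphi(p).$$
Hence there are $u^*\in\partial f(p)$ and $v^*\in\partial\varphi(p)$ with $u^*+v^*=0$.

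Next I compute $\partial\varphi(p)$. It is standard (see e.g. \cite{Chi09}) that $\partial\left(\frac12\|\cdot\|^2\right)(w)=J(w)$. Since $\varphi(y)=\frac12\|y-x\|^2$, this gives $\partial\varphi(p)=J(p-x)=-J(x-p)$, using that $J(-w)=-J(w)$. So $v^*=-z^*$ for some $z^*\in J(x-p)$, and therefore $u^*=z^*\in\partial f(p)$. The subgradient inequality then reads $f(y)\geq f(p)+\langle z^*,y-p\rangle$ for all $y\in X$, which is exactly
$$f(p)-f(y)\leq\langle z^*,p-y\rangle .$$

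The main obstacle is justifying the sum rule. It needs the continuity of $\varphi$ at a point of $\mathrm{dom} f$; $p$ itself is such a point, and $\varphi$ is continuous everywhere. Uniform convexity is used only to guarantee that $\Prox_f$ is well defined, i.e. that the minimizer exists and is unique.

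If one prefers to avoid the sum rule, I would instead argue directly. Fix $y$ and set $y_t:=p+t(y-p)$. Minimality of $p$ and convexity of $f$ give
$$t\,(f(p)-f(y))\leq \frac12\|x-y_t\|^2-\frac12\|x-p\|^2 .$$
Dividing by $t$ and letting $t\downarrow0$, the right side tends to the one-sided directional derivative of $\frac12\|\cdot\|^2$ at $x-p$ in the direction $p-y$. This derivative equals $\max\{\langle w^*,p-y\rangle : w^*\in J(x-p)\}$. This yields, for each $y$, some $z^*_y\in J(x-p)$ satisfying the inequality. A single $z^*$ working for all $y$ would then need a minimax or separation argument, which is precisely what the sum rule packages. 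So I would go with the sum rule.
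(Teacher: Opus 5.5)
Your proof is correct and follows essentially the standard route the paper alludes to. The paper cites \cite[Lemma 3.8]{BacKoh18} without proof, remarking only that its proof uses the subdifferential; your argument combines Fermat's rule for $f+\frac12\|x-\cdot\|^2$ at $\Prox_f x$, the Moreau--Rockafellar sum rule (valid because $\frac12\|x-\cdot\|^2$ is continuous everywhere), the identity $\partial\bigl(\frac12\|\cdot\|^2\bigr)=J$ and $J(-w)=-J(w)$ to obtain a single $z^*\in J(x-\Prox_f x)\cap\partial f(\Prox_f x)$. Your observation that the directional-derivative route only gives a $y$-dependent $z^*_y$ is apt, and it is exactly why the paper's hyperbolic analogue settles for the weaker bound $d(x,\Prox_f x)\cdot d(y,\Prox_f x)$, which that route does deliver.
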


The problem with generalizing this is that the statement uses the dual and the proof uses the subdifferential, and we do not have readily available generalizations of these in the nonlinear setting. On the other hand, one may notice that the relevant content here is that (using the variables in the statement)
$$f(\Prox_fx) - f(y) \leq \|x-\Prox_f x\| \cdot \|y-\Prox_fx\|.$$
This can be formulated in the hyperbolic setting. Moreover, it can be proven using, instead of the subdifferential, just the usual real derivative.

\begin{lemma}
Let $(X,d,W)$ be a complete $UCW$-hyperbolic space and $f$ be a proper, convex, lsc function on $X$. Let $x$, $y \in X$.

Then $f(\Prox_fx) - f(y) \leq d(x,\Prox_fx) \cdot d(y,\Prox_fx)$.
\end{lemma}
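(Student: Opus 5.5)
Write $p := \Prox_fx$ and compare $p$ with points on the geodesic from $p$ to $y$, using the minimality of $p$ and then differentiating in the parameter at $0$. We may assume $f(y) < \infty$, since otherwise the left-hand side is $-\infty$ (or the inequality is trivial). Note also that $f(p)<\infty$, because $f$ is proper and $p$ is a minimizer.

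For $t \in (0,1]$ put $z_t := (1-t)p + ty$. The definition of $p$ as the argmin gives
$$f(p) + \frac12 d^2(x,p) \leq f(z_t) + \frac12 d^2(x,z_t).$$
Convexity of $f$ gives $f(z_t) \leq (1-t)f(p) + tf(y)$. To bound $d(x,z_t)$ I would use $(W1)$ rather than Proposition~\ref{quad}, because the product form of the conclusion requires cross terms:
$$d(x,z_t) \leq (1-t)d(x,p) + t\,d(x,y) \leq (1-t)d(x,p) + t\bigl(d(x,p)+d(p,y)\bigr) = d(x,p) + t\,d(p,y).$$
Since both sides are nonnegative, squaring gives $d^2(x,z_t) \leq d^2(x,p) + 2t\,d(x,p)d(p,y) + t^2 d^2(p,y)$.

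Substituting these bounds, cancelling $f(p)$ and $\frac12 d^2(x,p)$, and dividing by $t>0$ yields
$$f(p) - f(y) \leq d(x,p)\,d(p,y) + \frac t2 d^2(p,y).$$
Letting $t \to 0^+$ gives the claim. This limit is the ``usual real derivative'' step the paper alludes to, applied to the quadratic-in-$t$ bound.

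The only delicate point is the choice of estimate for $d(x,z_t)$. Proposition~\ref{quad} would introduce $d^2(x,y)$ and lose the desired product. The triangle-inequality route via $(W1)$ gives exactly the first-order term $2t\,d(x,p)d(p,y)$, so after that the argument is routine.
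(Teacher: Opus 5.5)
Your proposal is correct and follows essentially the same route as the paper: compare $\Prox_fx$ with the point $(1-t)\Prox_fx+ty$ via minimality and convexity, bound the distance using $(W1)$ and the triangle inequality, square, divide by $t$, and let $t\to 0$. Your extra remarks on the finiteness of $f(\Prox_fx)$ and the case $f(y)=\infty$ are harmless refinements that the paper leaves implicit.
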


\begin{proof}
Put $z:=\Prox_f x$. Let $t \in (0,1)$ and put $v:=(1-t)z+ty$. By the definition of the proximal mapping and the convexity of $f$, we have that
$$f(z) + \frac12d^2(x,z) \leq f(v) + \frac12 d^2(x,v) \leq (1-t)f(z) + tf(y) + \frac12 d^2(x,v),$$
so
$$t(f(z)-f(y)) \leq \frac12(d^2(x,v) - d^2(x,z)).$$
On the other hand, we have that
$$d(x,v) \leq (1-t)d(x,z)+td(x,y) \leq (1-t)d(x,z) + td(x,z) + td(y,z) =  d(x,z) + td(y,z),$$
so
$$d^2(x,v) \leq d^2(x,z) + 2td(x,z)d(y,z) + t^2d^2(y,z).$$
By putting these together, we get that
$$t(f(z)-f(y)) \leq \frac12(2td(x,z)d(y,z) + t^2d^2(y,z)),$$
so
$$f(z) - f(y) \leq d(x,z) \cdot d(y,z) + \frac t 2 d^2(y,z),$$
and, by taking $t \to 0$, we get the conclusion.
\end{proof}

We may now prove the desired generalization.

\begin{lemma}
Let $B$, $r >0$. Let $(X,d,W)$ be a complete $UCW$-hyperbolic space and $f$ be a proper, convex, lsc function on $X$. Let $x$, $y \in X$ be such that $d(x,\Prox_fx) \leq B$ and $d(x,y) \leq r$.

Then $d(y,\Prox_fy) \leq B + \sqrt{B^2 + 2B(B+r) + (r+B)^2}$.
\end{lemma}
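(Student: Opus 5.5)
Write $p:=\Prox_fx$, $q:=\Prox_fy$ and $D:=d(y,q)$. The plan is to compare the values of the proximal objective at $y$ on the two points $p$ and $q$, control the resulting difference $f(p)-f(q)$ with the preceding lemma, and then solve a quadratic inequality in $D$.

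First, since $q$ minimizes $z\mapsto f(z)+\frac12 d^2(y,z)$, we have
$$f(q)+\frac12 D^2 \leq f(p)+\frac12 d^2(y,p),$$
so $\frac12 D^2 \leq (f(p)-f(q)) + \frac12 d^2(y,p)$. Here $f(p)$ and $f(q)$ are finite, as the values of a proper function at its proximal points. By the triangle inequality, $d(y,p)\leq d(y,x)+d(x,p)\leq r+B$, which bounds the second term by $\frac12(r+B)^2$.

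For the first term, apply the previous lemma at the point $x$ with $q$ in the role of $y$. This gives $f(p)-f(q)\leq d(x,p)\,d(q,p)\leq B\,d(p,q)$. Then bound $d(p,q)$ through the chain $p,x,y,q$:
$$d(p,q)\leq d(p,x)+d(x,y)+d(y,q)\leq B+r+D.$$
Combining these and multiplying by $2$ yields
$$D^2 \leq 2B(B+r+D)+(r+B)^2, \qquad\text{i.e.}\qquad D^2-2BD-\left(2B(B+r)+(r+B)^2\right)\leq 0.$$

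Finally, a quadratic $t^2-2Bt-K$ with $K\geq 0$ is non-positive only for $t$ not exceeding its larger root $B+\sqrt{B^2+K}$. Hence $D\leq B+\sqrt{B^2+2B(B+r)+(r+B)^2}$, which is the claimed bound.

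The only genuinely nontrivial step is the use of the preceding lemma, which substitutes for the subdifferential argument of Ba\v{c}\'ak and Kohlenbach. The main point of care is to apply it at $x$ rather than at $y$, so that the known bound $d(x,p)\leq B$ enters. This leaves $D$ appearing linearly on the right-hand side, which is then absorbed by the quadratic inequality.
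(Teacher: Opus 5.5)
Your proof is correct and follows the paper's argument essentially step for step. The paper likewise compares the proximal objective at $y$ on $\Prox_f x$ and $\Prox_f y$, applies the preceding lemma at $x$ with $\Prox_f y$ as the test point, uses the same triangle-inequality bounds, and solves the same quadratic inequality $C^2 \leq 2BC + 2B(B+r) + (r+B)^2$ for $C = d(y,\Prox_f y)$.
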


\begin{proof}
Put $z:=\Prox_f x$, $w:=\Prox_f y$. From the previous lemma, we get that
$$f(z) - f(w) \leq d(x,z) \cdot d(w,z).$$
Put $C:=d(y,w)$. We have that
$$d(y,z) \leq d(y,x) + d(x,z) \leq r+B,$$
so
$$d(w,z) \leq d(w,y) + d(y,z) \leq C+r+B,$$
and
$$f(z) - f(w) \leq B(B+C+r).$$
We also have that
$$f(w) + \frac12d^2(y,w) \leq f(z) + \frac12d^2(y,z) \leq f(w) + B(B+C+r) + \frac12(r+B)^2,$$
so
$$\frac12C^2 \leq BC + B(B+r) + \frac12(r+B)^2,$$
from which we get that $C \leq B + \sqrt{B^2 + 2B(B+r) + (r+B)^2}$.
\end{proof}

\end{document}